\newtheorem{theorem}[subsection]{Theorem}
\newtheorem{lemma}[subsection]{Lemma}
\theoremstyle{definition}
\theoremstyle{remark}
\newtheorem{remark}[subsection]{Remark}
\title{{\bf Newton flows for elliptic functions III}\\
{\bf {\small Classification of 
%$2^{\text{nd}}$ and
$3^{\text{rd}}$ order Newton graphs}}}
\author{G.F. Helminck,\\
Korteweg-de Vries Institute\\
University of Amsterdam\\
P.O. Box 94248\\
1090 GE Amsterdam\\
The Netherlands\\
e-mail: g.f.helminck@uva.nl\\ 
F. Twilt,\\
Department of Applied Mathematics\\
University of Twente\\
P.O. Box 217, 7500 AE Enschede\\
The Netherlands\\
e-mail: f.twilt@kpnmail.nl\\
}
\begin{document}
\maketitle

\begin{abstract}
\noindent
A Newton graph of order $r( \geqslant 2)$ is a cellularly embedded toroidal graph on $r$ vertices, $2r$ edges and $r$ faces that fulfils certain combinatorial properties (Euler, Hall). The significance of these graphs relies on their role in the study of {\it structurally stable elliptic Newton flows} - say $\overline{\overline{\mathcal{N}}} (f)$ - of order $r$, i.e. desingularized continuous versions of Newton's iteration method for finding zeros for an {\it elliptic} function $f$ (of order $r$). In previous work we established a representation of these flows in terms of Newton graphs. The present paper results into the classification of all $3^{\text{rd}}$ order Newton graphs, implying a list of all nine possible $3^{\text{rd}}$ order flows $\overline{\overline{\mathcal{N}}} (f)$ (up to conjugacy and duality). 
\end{abstract}

\noindent
{\bf Subject classification:} 
05C45, 05C75, 
33E05, 34D30, 
37C15, 
37C20, 37C70, 49M15.\\

\noindent
{\bf Keywords:}  
Desingularized (elliptic) Newton flow, structural stability, Newton graph(elliptic), Angle property, Euler property, Hall condition. 

\section{Motivation and preliminaries}

\noindent
{\large{\bf {\small 1.1 Newton flows vs. Newton graphs}}}\\

 Throughout this paper the connected graph $\mathcal{G}$ is a cellular\footnote{\label{Vtnt1} i.e., each  face is homeomorphic to an open $\mathbb{R}^{2}$-disk.} embedding in the torus $T$ of an abstract connected multigraph $\mathcal{G}$ (i.e., no loops) with $r$ vertices, 2$r$ edges ($r \geqslant 2$) and thus $r$ faces; $r=$order $\mathcal{G}$.
  We say that $\mathcal{G}$ has the {\it A(angle)-property} if all angles at a vertex in the boundary of a face spanning a sector of this face, are well defined, strictly positive and sum up to 2$\pi$. The {\it A-property} has a combinatorial interpretation (Hall), cf. \cite{HT2}.
  We say that $\mathcal{G}$ has the {\it E(Euler)-property} if the boundary of each face, as subgraph of $\mathcal{G}$, is Eulerian, i.e., admits a closed facial walk that traverses each edge only once and goes through all vertices. 
  The graph $\mathcal{G}$ is called a {\it Newton graph} if both the {\it A-property} and the {\it E-property} hold.
  
    It is proved (\cite{HT2}) that the geometrical dual (denoted $\mathcal{G}^{*}$) of a Newton graph $\mathcal{G}$ is also Newtonian.
The anti-clockwise permutation on the embedded edges at vertices of $\mathcal{G}$ induces a clockwise orientation of the facial walks on the boundaries of the $\mathcal{G}$-faces, cf. Fig.1-(a). On its turn, the clockwise orientation of $\mathcal{G}$-faces gives rise to a clockwise permutation on the embedded edges at the vertices of $\mathcal{G}^{*}$, and thus to an anti-clockwise orientation of $\mathcal{G}^{*}$. In the sequel $\mathcal{G}$ and $\mathcal{G}^{*}$ are always oriented in this way: $\mathcal{G}$ clockwise ($-$), $\mathcal{G}^{*}$ anti-clockwise ($+$). Altogether, we find:  $(\mathcal{G}^{*})^{*}=\mathcal{G}$.

The significance of Newton graphs relies on the study of so called {\it elliptic Newton flows}:\\
With $f$ a non-constant elliptic (i.e., meromorphic, doubly periodic) function of order $r$ ($\geqslant 2$), we considered (\cite{HT1},\cite{HT2}) 
$C^{1}$-vector fields (flows), denoted $\overline{\overline{\mathcal{N}}} (f)$, on $T$ that are defined Ð on each chart of $T$Ð as a toroidal, desingularized version of the planar dynamical system\footnote{\label{vtnt2} In fact, we considered the system $  \dfrac{dz}{dt} =-(1+|f(z)|^{4})^{-1}|f'(z)|^{2}\dfrac{f (z)}{f^{'} (z)}$: a continuous version of Newton's damped iteration method for finding zeros for $f$, see \cite{HT1}.} given by
\begin{equation}
\label{vgl2x}
  \dfrac{dz}{dt} = \dfrac{-f (z)}{f^{'} (z)}, z \in \mathbb{C},
\end{equation}
thereby focussing on qualitative features of phase portraits (families of trajectories). Here, zeros, poles and critical points [i.e., $f'$ vanishes but $f$ not] of $f$ serve as resp. attractors, repellors and saddles.  We emphasize that $\overline{\overline{\mathcal{N}}} (f)$ is 
{\it not} complex analytic.

The flow $\overline{\overline{\mathcal{N}}} (f)$ is called {\it structural stable} if its phase portrait is topologically invariant under small perturbations of the zeros and poles for $f$. We obtained: (cf. \cite{HT1}, \cite{Peix2}) \\
{\bf Characterization:} $\overline{\overline{\mathcal{N}}} (f)$ is structurally stable iff there holds:\\
(i) all  zeros,  poles and critical points for $f$ are simple,\\
(ii) the phase portrait does not exhibit ``saddle connections''.\\
{\bf Genericity:}\;\;\;\;\;\;\;\;\;\;\;\;\;$\overline{\overline{\mathcal{N}}} (f)$ is structurally stable for ``almost all''\footnote{\label{Vtnt3} $f$ in an open and dense set of the set of all functions $f$ of order $r$. (w.r.t. an appropriate topology)} functions $f$.\\
{\bf Duality:}\;\;\;\;\;\;\;\;\;\;\;\;\;\;\;\;\;\;If  $\overline{\overline{\mathcal{N}}} (f)$ is structurally stable, 
then also $\overline{\overline{\mathcal{N}}} (\frac{1}{f})$ and  $\overline{\overline{\mathcal{N}}} (\frac{1}{f})=-\overline{\overline{\mathcal{N}}} (f)$.\\
  Let $\overline{\overline{\mathcal{N}}} (f)$ be structurally stable, then $\mathcal{G}(f )$ is a toroidal graph with as vertices the attractors, as edges the unstable manifolds at saddles and as faces the basins of repulsion of the repellors for $\overline{\overline{\mathcal{N}}} (f)$. It turns out that $\mathcal{G}(f )$ is a Newton graph of order $r$ endowed with the clockwise orientation and  moreover, $\mathcal{G}(\frac{1}{f})$=$-\mathcal{G}(f)^{*}$. 
  
The main result obtained in \cite{HT2} is:\\
{\bf Representation and classification:} (all graphs and flows of order $r$)\\
Given a Newton graph $\mathcal{G}$,  a structurally stable flow $\overline{\overline{\mathcal{N}}} (f_{\mathcal{G}})$ exists such that:
$$
\mathcal{G}(f_{\mathcal{G}}) \sim \mathcal{G}, (\text{ thus }\mathcal{G}^{*} \sim - \mathcal{G}(\frac{1}{f_{\mathcal{G}}}))
$$
   and,  if $\mathcal{G}, \mathcal{H}$ are Newton graphs, then:
 \begin{equation}
\label{rep}
\overline{\overline{\mathcal{N}}} (f_{\mathcal{G}}) \sim \overline{\overline{\mathcal{N}}} (f_{\mathcal{H}}) \Leftrightarrow \mathcal{G} \sim \mathcal{H},
\end{equation}
where, $\sim$ in the l.h.s. stands for conjugacy\footnote{\label{Vtn5} Two elliptic Newton flows are conjugate if an homeomorphism from $T$ onto itself exists mapping the phase portrait of one flow
onto that of the other, thereby respecting the orientations of the trajectories.} between Newton flows, and $\sim$ in the r.h.s. for equivalency (i.e., an orientation preserving isomorphism\footnote{\label{Vtn6} i.e., 
between the underlying abstract graphs, respecting the oriented faces of the embedded graphs.})
between Newton graphs.\\
 $\mathcal{G}(f )$ is, so to say,  the principal part of the phase portrait of the structurally stable flow $\overline{\overline{\mathcal{N}}} (f)$ and determines, in a qualitative sense, the whole phase portrait; see Fig. 1-(a), (b) for an illustration.
  In accordance with our philosophy (``focus on qualitative aspects''), conjugate flows are considered as equal. Note however, that by the above classification we have: $ \overline{\overline{\mathcal{N}}} (f) \sim \overline{\overline{\mathcal{N}}} (\frac{1}{f})$ iff   $\mathcal{G}(f ) \sim -\mathcal{G}(f )^{*}$, which is in general not true\footnote{\label{Vtnt6}If $\overline{\overline{\mathcal{N}}} (f)\sim \overline{\overline{\mathcal{N}}} (\frac{1}{f})$, and thus  $\mathcal{G}(f ) \sim \mathcal{G}(\frac{1}{f})$,  we call the flow $\overline{\overline{\mathcal{N}}} (f)$ and also the graph $\mathcal{G}(f )$ {\it self-dual}. More general: $\mathcal{G}$ is called self-dual if $\mathcal{G} \!\! \sim \!\! - \mathcal{G}^{*}$.}. Nevertheless, from our point of view it is reasonable to consider the dual flows $\overline{\overline{\mathcal{N}}} (f)$ and $\overline{\overline{\mathcal{N}}} (\frac{1}{f})$ as equal (since the phase portraits are equal, up to the orientation of the trajectories).
   So, the problem of classifying structurally stable elliptic Newton flows is reduced to the classification (under equivalency and duality) of Newton graphs. 

  If $r=2$, the {\it A-property} always holds\footnote{\label{Vtnt7} In $r=2$ we proved \cite{HT2} that all structurally stable $\overline{\overline{\mathcal{N}}} (f)$ are mutually conjugate. So, it is to be
    expected that, in this case, all Newton graphs are equal; see also the forthcoming Remark 2.3} and if $r=3$ the {\it E-property} implies the {\it A-property}, whereas, in case $r=4$,  possibly the {\it A-property} holds, but not the {\it E-property} (cf. \cite{HT2}, Lemma 3.17, Remark 3.18).

  So, to avoid
  a further analysis of the {\it A-property}, we only deal with the cases $r=2, 3$. 
  
  \begin{figure}[h!]
\begin{center}
\includegraphics[scale=0.5]{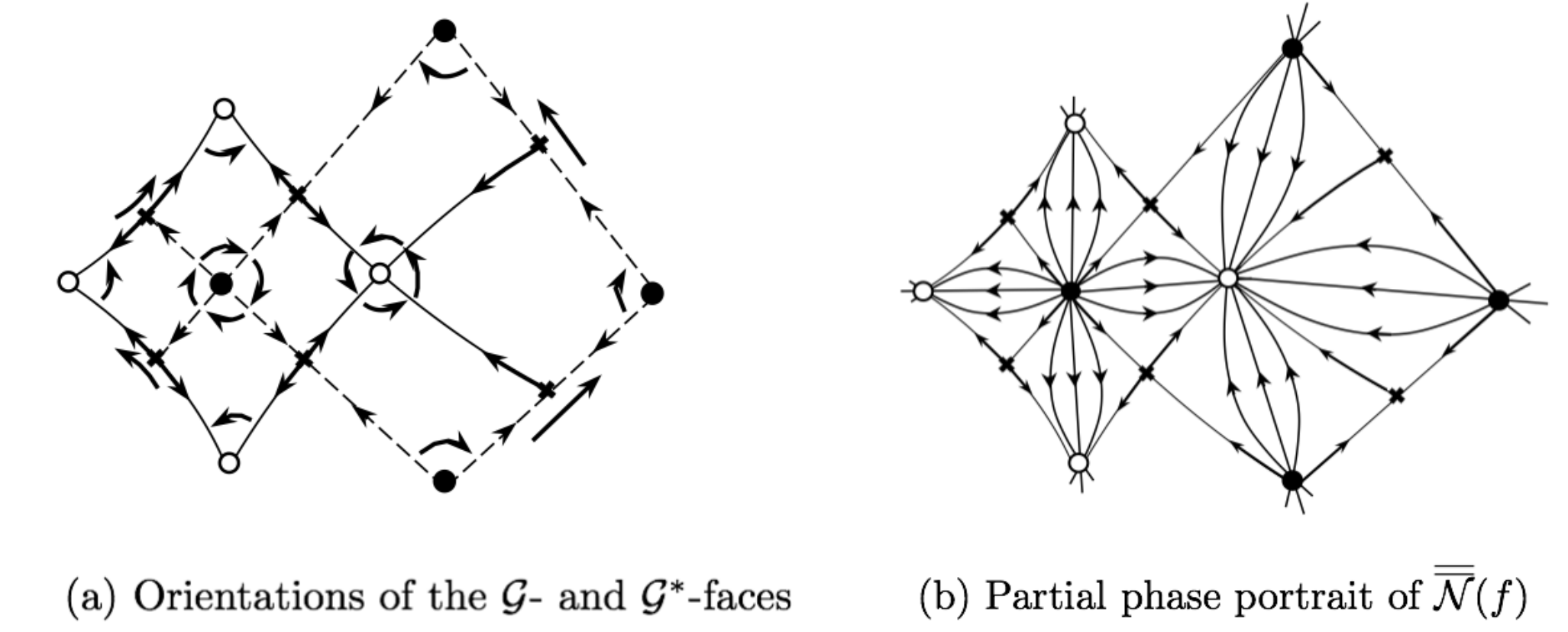}
\caption{\label{Figure2}}
\end{center}
\end{figure}

\noindent
{\large{\bf  {\small 1.2 Some concepts from graph theory}}}\\

We  recall here some concepts from graph theory that are used in the sequel; see \cite{MoTh} as a reference. 
Let $\mathcal{G}$ be a cellularly embedded graph on $T$, not necessarily fulfilling the E-property. We consider the {\it rotation system} $\Pi$ for $\mathcal{G}$:
$$
\Pi=\{ \pi_{v} \!\mid \! \text{all vertices $v$ in $\mathcal{G}$} \},
$$
where the {\it local  rotation system} $\pi_{v}$ at $v$ is the cyclic permutation of the edges incident with $v$ such that $\pi_{v}(e)$ is the successor of $e$ in the anti-clockwise ordering around $v$. The boundary of a face of $\mathcal{G}$ is formally described by the {\it Face traversal procedure}.\\

\noindent
{\bf Face traversal procedure:}\\
If $e(=(v'v''))$ stands for an edge, with end vertices $v'$ and $v''$, 
a $\Pi$-{\it (facial) walk} 
$w$, on $\mathcal{G}$ is defined by: 
Let  $e_{1}=(v_{1}v_{2})$ be an edge. Then the closed walk
$w=v_{1}e_{1}v_{2}e_{2}v_{3} \cdots v_{k}e_{k}v_{1}$ determined by the requirement that, for $i=1, \cdots , \ell,$ we have $\pi_{v_{i+1}}(e_{i})=e_{i+1}$, where $e_{\ell +1}=e_{1}$ and $\ell$ is minimal\footnote{\label{FTN3}Apparently, such "minimal" $l$ exists since $\mathcal{G}$ is finite. In fact, the first edge which is repeated in the same direction when 
traversing $w$, is $e_1$.}, is the desired $\Pi$-(facial) walk.
\\ 

\noindent
Note that each edge occurs either {\it once} in two different $\Pi$-walks, or {\it twice} (with opposite orientations) in only
one $\Pi$-walk. $\mathcal{G}$ has the {\it E-property} iff the first possibility holds for all $\Pi$-walks. The dual $\mathcal{G}^{*}$ admits a
loop iff the second possibility occurs at least in one of the $\Pi$-walks.
Thus, we have:\\

\noindent
Under the {\it E-property} for $\mathcal{G}$, the dual $\mathcal{G}^{*}$ has no loops\footnote{\label{Vtn9}Note that by assumption $\mathcal{G}$ has no loops.} and each $\mathcal{G}$-edge is adjacent to different faces; in fact, any $\mathcal{G}$-edge, say $e$, determines precisely one $\mathcal{G}^{*}$-edge $e^{*}$ (and vice versa) so that
there are $2r$ ÒintersectionsÓ $s=(e, e^{*} )$ of $\mathcal{G}$- and $\mathcal{G}^{*}$-vertices.\\

A crucial principle in our considerations, is \\

\noindent
{\bf The Heffter-Edmonds-Ringel rotation principle:}\\
By this principle, the graph $\mathcal{G}$ is
uniquely determined up to an orientation preserving
isomorphism by its rotation system. 
In fact, consider for each $\Pi$-walk $w$ of length $l$, a so-called $\Pi$-polygon in the plane with $l$ sides labelled by the edges of $w$, so that each polygon is disjoint from the other polygons. These  polygons can be used to construct (patching them along identically labelled sides) an orientable surface $S$ and in $S$ a 2-cell embedded graph $\mathcal{H}$ with faces determined by the polygons. Then $S$ is homeomorphic to $T$ and $\mathcal{H}$ isomorphic with $\mathcal{G}$.

The clockwise oriented  $\Pi$-walks of $\mathcal{G}$ determine a clockwise rotation system $\Pi^{*}$ for $\mathcal{G}^{*}$ that - by the face traversal procedure - leads to anti-clockwise oriented  $\Pi^{*}$-walks for $\mathcal{G}^{*}$.
Occasionally, $\mathcal{G}$  and $\mathcal{G}^{*}$ will be referred to as to the pair ($\mathcal{G}$, $\Pi$) resp. ($\mathcal{G}^{*}$, $\Pi^{*}$).

\section{Classification of Newton graphs of order 3}
\label{sec11} 

Let $\mathcal{G}(=(\mathcal{G}, \Pi))$ be an arbitrary Newton graph 
of order $r$, and $\mathcal{G}^{*}(=(\mathcal{G}^{*}, \Pi^{*}))$ a geometrical dual of $\mathcal{G}$.
The graph
$\mathcal{G}^{*}$ is also a Newton graph of order $r$, see Subsection 1.1. 
The vertices and faces of $\mathcal{G}$ are denoted by $v_{i}$, respectively by $F_{r+i}$. The $\mathcal{G}^{*}$-vertex ``located'' in $F_{r+i}$ is denoted by $v^{*}_{r+i}$, and the $\mathcal{G}^{*}$-face that ``contains'' $v_{i}$ by $F^{*}_{i}, i=1, \! \cdots \! , r.$
In forthcoming figures, the vertices $\mathcal{G}$ and $\mathcal{G}^{*}$ will be indicated by their indices in combination with the symbols $\circ$ and   $\text{\textbullet}$  respectively, 
i.e. $v_{i} \leftrightarrow \circ_{i}$, and $v^{*}_{r+i} \leftrightarrow  \text{\textbullet}_{r+i}$. 
This induces an indexation of the faces of $\mathcal{G}$ and $\mathcal{G}^{*}$ as follows: $F_{r+i}  \leftrightarrow   \text{\textbullet}_{r+i}$ and $F^{*}_{i} \leftrightarrow \circ_{i}$.

The edges of $\mathcal{G}$ and the corresponding edges of $\mathcal{G}^{*}$ are $e_{k}$, resp. $e^{*}_{k}, k=1, \cdots, 2r $ (compare Subsection 1.2).
The degrees of the $\mathcal{G}$- and $\mathcal{G}^{*}$-edges are denoted by $\delta_{i}={\rm deg}(v_{i})$
, resp. $\delta^{*}_{i}={\rm deg}(v^{*}_{r+i})$.  	
Put  $\delta=(\delta(\mathcal{G}))=(\delta_{1}, \cdots ,\delta_{r}), \delta^{*}=(\delta(\mathcal{G}^{*}))=(\delta^{*}_{1}, \cdots ,\delta^{*}_{r})$ and note that there holds
$\delta(\mathcal{G}^{*})=\delta^{*}(\mathcal{G})$ and $\delta^{*}(\mathcal{G}^{*})=\delta(\mathcal{G})$.

We consider the ``common refinement'' $\mathcal{G} \wedge \mathcal{G}^{*}$ of $\mathcal{G}$ and $\mathcal{G}^{*}$. This graph\footnote{\label{FTN7}The abstract, directed graph underlying $\mathcal{G} \wedge \mathcal{G}^{*}$ is denoted by $\mathbb{P} (\mathcal{G})$, see also the forthcoming Fig.\ref{Figure9.6} (ii), Fig.\ref{Figure9.9}(i). For the significance of $\mathbb{P} (\mathcal{G})$ within the theory of structurally stable dynamical systems, see the papers \cite{HT2}, \cite{Peix2}.} is defined by: It has vertices on three levels:

\noindent
{\it Level } 1:  The vertices $v_{i}$ of $\mathcal{G}$,

\noindent
{\it Level } 2:  The ``intersections'' $s_{k}$ of the pairs $(e_{k},e^{*}_{k})$, compare Subsection 1.2,

\noindent
{\it Level } 3:  The  vertices $v^{*}_{r+i}$ of $\mathcal{G}^{*}$,\\
\noindent
whereas each $\mathcal{G}$-edge $e_{k}$ (each $\mathcal{G}^{*}$-edge $e^{*}_{k}$) is partitioned into two $\mathcal{G} \wedge \mathcal{G}^{*}$-edges connecting  
$s_{k}=(e_{k},e^{*}_{k})$ with the end vertices of $e_{k}$ (of $e^{*}_{k}$). Moreover, there are no $\mathcal{G} \wedge \mathcal{G}^{*}$-connections
between vertices on Level 1 and 3.
\begin{lemma}
\label{NL11.2}
The following relations hold:
\begin{equation*}
1 < \delta_{i} \leqslant 2r, 1 < \delta^{*}_{i} \leqslant 2r, \sum_{i=1}^{r} \delta_{i}  = \sum_{i=1}^{r}   \delta^{*}_{i}=4r.
\end{equation*}
By construction, $\mathcal{G} \wedge \mathcal{G}^{*}$ has precisely 4$r$ faces, and moreover, 
\begin{equation*}
\text{no } s_{k } \text{ is connected by two $\mathcal{G} \wedge \mathcal{G}^{*}$-edges to the same }v_{i} \text{ or the same }v^{*}_{r+i}.
\end{equation*}
\end{lemma}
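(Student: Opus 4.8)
Each of the stated relations follows from simple counting, so the plan is to verify them one at a time, using the axioms imposed on a Newton graph and the construction of $\mathcal{G} \wedge \mathcal{G}^{*}$.

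\textbf{Degree bounds.} For the inequalities $1 < \delta_i \le 2r$, the upper bound is immediate since $\mathcal{G}$ has exactly $2r$ edges, so no vertex can be incident with more than $2r$ of them. For the lower bound $\delta_i > 1$, I would argue that $\delta_i = 1$ is impossible: a vertex of degree $1$ lies on the boundary of a single face, and the facial walk around that face would traverse the unique incident edge twice in opposite directions — but then the vertex contributes the full angle $2\pi$ in a single sector while the A-property demands that all angles in a sector be strictly positive and sum to $2\pi$ at \emph{each} vertex of \emph{each} incident face; more simply, a degree-$1$ vertex cannot be ``passed through'' by an Eulerian facial walk in the sense of the E-property (a closed walk through all vertices of the face boundary traversing each edge once), since the only way to reach and leave such a vertex on its single edge is to use that edge twice. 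Hence $\delta_i \ge 2$. The same reasoning applied to $\mathcal{G}^{*}$, which is again a Newton graph by Subsection 1.1, gives $1 < \delta^{*}_i \le 2r$.

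\textbf{Degree sums.} The handshake lemma gives $\sum_{i=1}^{r} \delta_i = 2 \cdot (\#\text{edges of }\mathcal{G}) = 2 \cdot 2r = 4r$, and likewise $\sum_{i=1}^{r} \delta^{*}_i = 4r$ since $\mathcal{G}^{*}$ also has $2r$ edges (it has the same number of edges as $\mathcal{G}$, each $\mathcal{G}$-edge $e_k$ determining precisely one $\mathcal{G}^{*}$-edge $e^{*}_k$).

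\textbf{Face count of $\mathcal{G} \wedge \mathcal{G}^{*}$.} Here I would use Euler's formula for the torus, $V - E + F = 0$. The vertex set of $\mathcal{G} \wedge \mathcal{G}^{*}$ consists of the $r$ vertices of $\mathcal{G}$, the $r$ vertices of $\mathcal{G}^{*}$, and the $2r$ intersection points $s_k$, so $V = 4r$. Each of the $2r$ edges $e_k$ of $\mathcal{G}$ is split into two sub-edges by $s_k$, and similarly each $e^{*}_k$; so $E = 2 \cdot 2r + 2 \cdot 2r = 8r$. Euler's formula then forces $F = E - V = 8r - 4r = 4r$. Alternatively — and this is the more geometrically transparent route — each face of $\mathcal{G} \wedge \mathcal{G}^{*}$ is a ``quadrilateral'' corner region bounded by two half-edges of $\mathcal{G}$ and two half-edges of $\mathcal{G}^{*}$ around one intersection $s_k$: the E-property guarantees that the $2r$ intersections $s=(e,e^{*})$ are in bijection with pairs (edge of $\mathcal{G}$, dual edge), and superimposing the two graphs subdivides the torus into exactly two such corner regions per intersection, giving $2 \cdot 2r = 4r$ faces. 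I would present the Euler-characteristic argument as the primary proof and mention the geometric picture as corroboration.

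\textbf{No double connection.} Finally, suppose some $s_k$ were joined to a vertex $v_i$ of $\mathcal{G}$ by two $\mathcal{G} \wedge \mathcal{G}^{*}$-edges. By construction these two sub-edges are the two halves into which $e_k$ is partitioned, so both end vertices of $e_k$ would equal $v_i$, i.e. $e_k$ would be a loop of $\mathcal{G}$ — contradicting the standing assumption that $\mathcal{G}$ is loopless. If instead $s_k$ were doubly joined to some $v^{*}_{r+i}$, then $e^{*}_k$ would be a loop of $\mathcal{G}^{*}$; but under the E-property for $\mathcal{G}$ the dual $\mathcal{G}^{*}$ has no loops (stated in Subsection 1.2). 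This settles the last assertion.

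\textbf{Main obstacle.} None of these steps is deep; the only point requiring genuine care is the strict lower bound $\delta_i > 1$, where I must invoke the defining A- or E-property rather than pure counting — so the write-up should make explicit which property rules out a pendant vertex (and note that Euler's formula alone, giving average degree $4$, does not by itself exclude a single degree-$1$ vertex).
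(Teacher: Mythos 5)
Your proposal is correct, and for most of the lemma it amounts to spelling out what the paper leaves implicit: the handshake count, the upper bound $\delta_i\leqslant 2r$ from the total number of edges, and the last assertion via ``a double connection from $s_k$ would make $e_k$ (resp.\ $e^{*}_k$) a loop'', which is exactly why the paper's one-sentence proof records that $\mathcal{G}$ and $\mathcal{G}^{*}$ are loopless. The one genuine divergence is the lower bound $\delta_i>1$: the paper attributes the absence of degree-one vertices to the \emph{A-property}, whereas you derive it from the \emph{E-property} -- a pendant vertex $v$ lies on the boundary of a face whose boundary subgraph contains $v$ with degree one, so any closed walk through $v$ must use its unique edge twice and that boundary cannot be Eulerian. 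This is sound, and arguably more self-contained, since it avoids unpacking the combinatorial (Hall) formulation of the A-property from \cite{HT2}; it also shows that for this lemma the E-property alone suffices. Two small caveats. First, your parenthetical A-property argument does not give a contradiction as literally stated: at a degree-one vertex the single sector has angle $2\pi$, which is strictly positive and sums to $2\pi$, so excluding it via angles really requires the Hall-type version of the A-property -- it is good that you lean on the E-property instead. Second, in the geometric ``corroboration'' of the face count, each intersection $s_k$ is surrounded by \emph{four} corner quadrilaterals, each shared between two intersections (so the count is $2r\cdot 4/2=4r$), not two per intersection; your primary Euler-characteristic computation $F=E-V=8r-4r=4r$ is correct, granted the (easily justified) fact that the overlay $\mathcal{G}\wedge\mathcal{G}^{*}$ is again cellularly embedded, which the paper simply subsumes under ``by construction''.
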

\begin{proof}
Since both $\mathcal{G}$ and $\mathcal{G}^{*}$ are Newtonian (Subsection 1.1)
, it follows by the {\it E-property} that these graphs do not admit loops (cf. Subsection 1.2)
, whereas the {\it A-property} ensures the non-existence of vertices for $\mathcal{G}$ and $\mathcal{G}^{*}$ of degree 1.
\end{proof}

From now on, let $\mathcal{G}$ be a $3^{rd}$ order Newton graph.
We adapt the notations: the $\mathcal{G}$-edges will be denoted by $a, b, c, d, e, f$, and the corresponding $\mathcal{G}^{*}$-edges by 
$a^{*}, b^{*}, c^{*}, d^{*}, e^{*}, f^{*}$. The $\mathcal{G} \wedge \mathcal{G}^{*}$-vertices determined by $(a,a^{*}), (b,b^{*}), \cdots$ are denoted by respectively $a, b, \cdots$.\\

\noindent
Our aim is  
a complete classification (up to 
equivalency) of all graphs $\mathcal{G}$, where we use
that, since $r=3$, 
the {\it E-property} already implies that  $\mathcal{G}$ is a Newton graph. (cf. Subsection 1.1)\\

We distinguish between the following three possibilities with respect to the boundaries (or $\Pi$-walks) of $\mathcal{G}$-faces 
:\\

\noindent
\underline{Case 1}:  The boundary of one of the $\mathcal{G}$-faces, say $\partial F_{4}$, has six edges, i.e. $\delta^{*}_{4}=6$ .

\noindent
\underline{Case 2}:  The boundary of one of the $\mathcal{G}$-faces, say $\partial F_{4}$, has five edges, i.e. $\delta^{*}_{4}=5$.

\noindent
\underline{Case 3}:  {\bf Each} boundary of the faces in $\mathcal{G}$ and $\mathcal{G}^{*}$ has four 
edges, i.e. 
$
\delta=\delta^{*}=(4,4,4).
$
\\

By Lemma \ref{NL11.2} the Cases 1, 2 and 3 are mutually exclusive and cover all possibilities. First we should check wether there exist graphs $\mathcal{G}$ that fulfil the conditions in the above cases, and, even so, to what extent $\mathcal{G}$ is determined by these conditions.\\

\noindent
{\it Ad Case 1}: Because of the {\it E-property}, and since $\mathcal{G}$ has {\it no loops}, it is necessary for the existence of $\mathcal{G}$ that the $\Pi$-walk $w_{F_{4}}$ of a possible face $F_{4}$ fulfils one of the following conditions:\\

\noindent
\underline{Subcase 1.1} :  Traversing 
$w_{F_{4}}$ once, each vertex appears precisely twice.

\noindent
\underline{Subcase 1.2} :  Traversing $w_{F_{4}}$
once, there is one vertex (say $v_{1}$) appearing three times, one (say $v_{2}$) appearing twice, and one (say $v_{3}$) showing up only once.\\

\noindent
The ({\it clockwise oriented})``$\Pi$-polygon'' for $\partial F_{4}$ has six sides, labelled
$a, b, \cdots, f$ and six ``corner points'', labelled by the vertices $v_{1},v_{2},v_{3}$ (repetitions necessary). 
Identifying points related to the same $\mathcal{G}$-vertex, brings us back to $w_{F_{4}}$. Assume that the cyclic permutations of the edges in $w_{F_{4}}$ that are incident with the same vertex are oriented {\it anti-clockwise} (compare the conventions in Subsection 1.1)

In Subcase 1.1 there are precisely two different -up to relabeling- possibilities for $w_{F_{4}}$ according to the schemes: (see Fig. \ref{Figure9.1})
\begin{equation}
\label{1-1-1}
w_{F_{4}}: v_{1}av_{2}bv_{3}cv_{1}dv_{2}ev_{3}f v_{1}av_{2}
\end{equation}
or
\begin{equation}
\label{1-1-2}
w_{F_{4}}: v_{1}av_{2}bv_{3}cv_{2}d v_{1}ev_{3}fv_{1}av_{2}.
\end{equation}

\begin{figure}[h!]
\begin{center}
\includegraphics[scale=0.55]{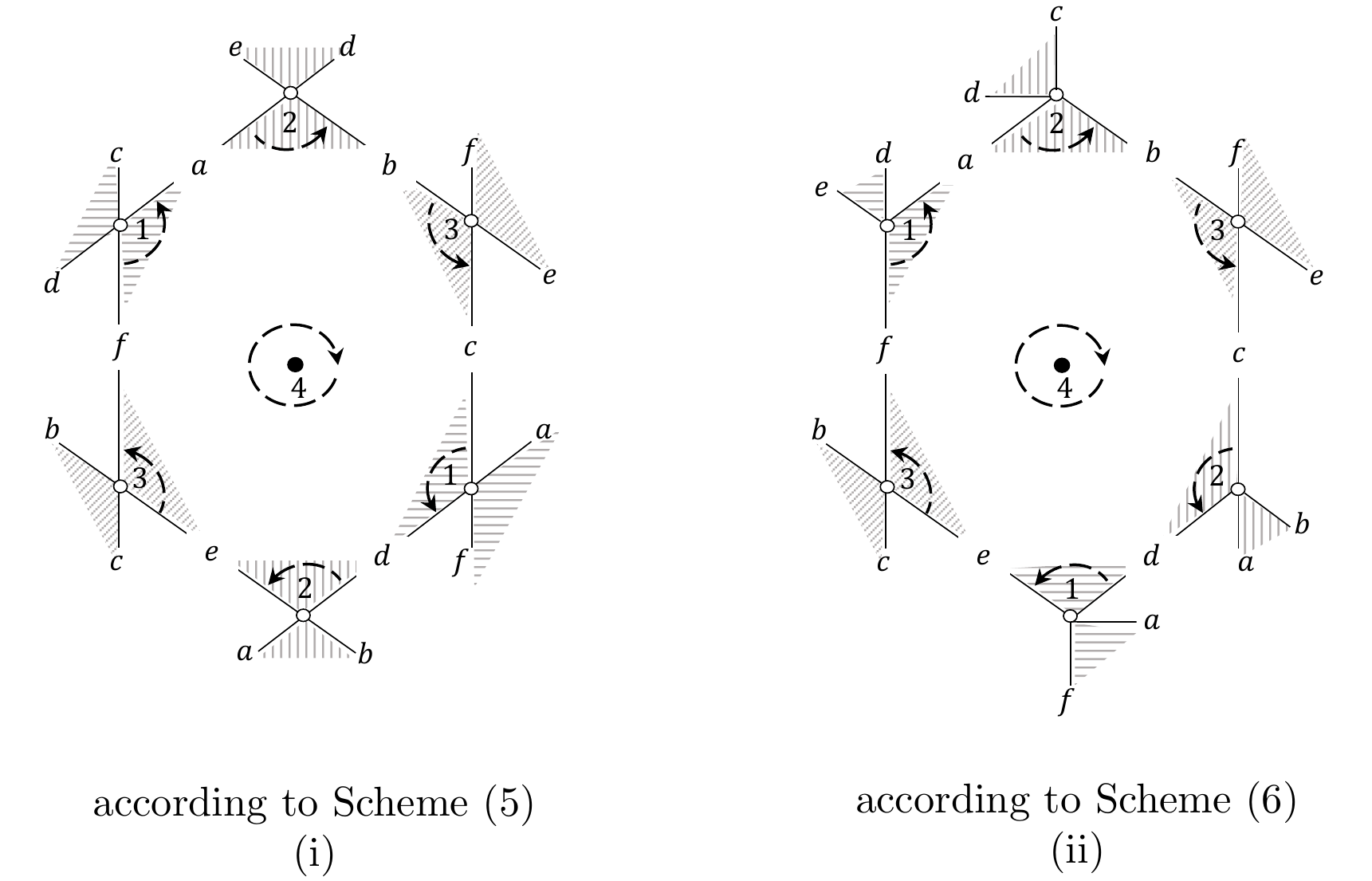}
\caption{\label{Figure9.1} The two possibilities for $w_{F_{4}}$ in Subcase 1.1.}
\end{center}
\end{figure}

First, we focus on $w_{F_{4}}$ given by Scheme (\ref{1-1-1}), see Fig. \ref{Figure9.1}(i). In the ({\it anti-clockwise}) cyclic permutation of the $w_{F_{4}}$-edges, incident with the same vertex, these edges occur in pairs,  determining a (positively oriented) sector of $F_{4}$.
As an edge is always adjacent to two different faces (cf. Subsection 1.2),
two $F_{4}$-sectors at the same $v_{i}$ are separated by facial sectors (at $v_{i}$) {\it not} belonging to $F_{4}$ (cf. Fig. \ref{Figure9.1}(i)). Since, moreover, the graph we are looking for, admits altogether twelve facial sectors, the cyclic permutation of the edges at $v_{i}$ are as indicated in Fig. \ref{Figure9.1}(i) and constitute a rotation system that -upto equivalency and relabeling- determines the graph, say $\mathcal{G}$, uniquely.

With the aid of the rotation system in Fig. \ref{Figure9.1}(i) and applying the {\it face traversal procedure}, as sketched in Subsection 1.2
, we find the closed walks $v_{2}av_{1}cv_{3}ev_{2}av_{1}$ 
and $v_{2}dv_{1}fv_{3}bv_{2}dv_{1}$ 
defining the two other $\mathcal{G}$-faces, say $F_{5}$, resp. $F_{6}$. (Note that each edge occurs twice in different walks, but with opposite orientation).
Glueing together the facial
polygons corresponding to $F_{4}$, $F_{5}$ and $F_{6}$, according to equally labeled sides and corner points, gives rise to the plane representations of $\mathcal{G}$ in 
Fig.\ref{Figure9.2}-(i).

\begin{figure}[h!]
\begin{center}
\includegraphics[scale=0.25]{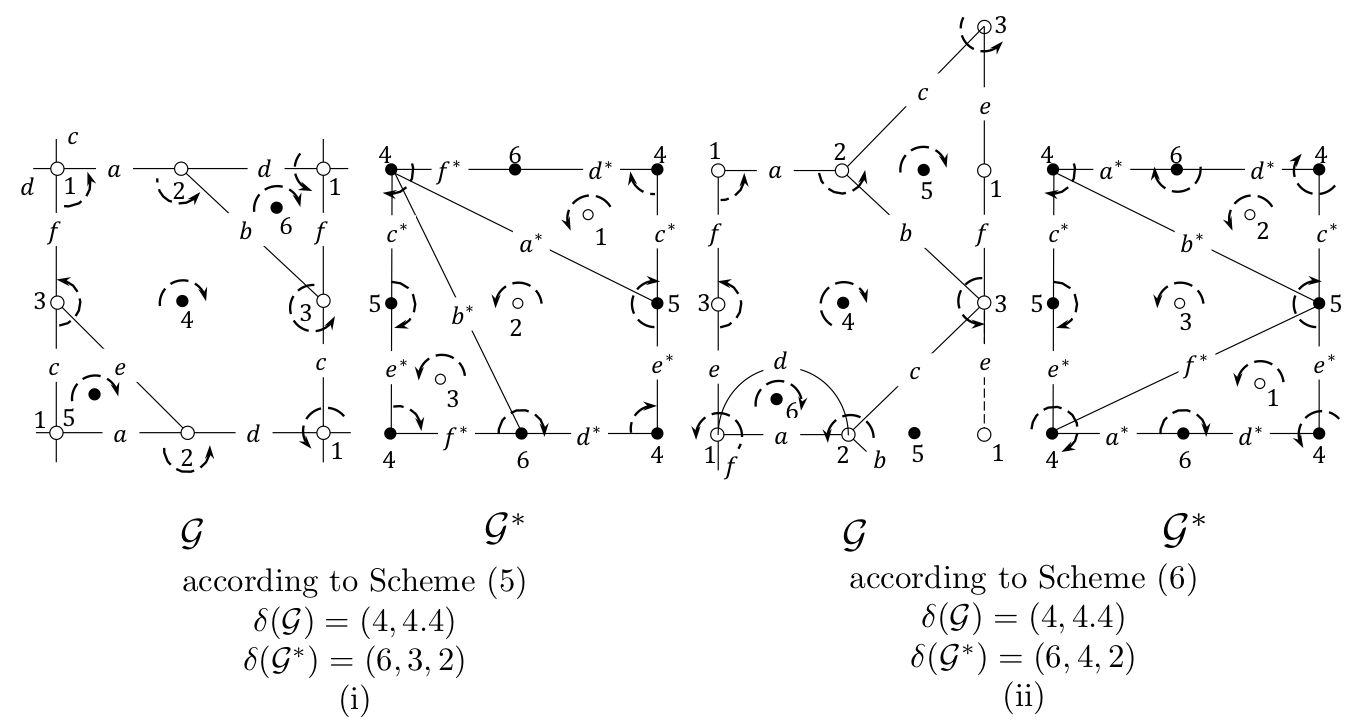}
\caption{\label{Figure9.2} The two possible plane representations for $\mathcal{G}$, $\mathcal{G}^{*}$ in Subcase 1.1.}
\end{center}
\end{figure}

From Fig.\ref{Figure9.2}-(i) it follows that the rotation system for $\mathcal{G}^{*}$ is as depicted in Fig. \ref{Figure9.3}. With the aid of this figure we find, again by the {\it face traversal procedure}, the following closed subwalks in $\mathcal{G}^{*}$:
$v_{4}^{*}a^{*}v_{5}^{*}c^{*}v_{4}^{*}d^{*}v_{6}^{*}f^{*}v_{4}^{*}a^{*}$, 
$v_{4}^{*}b^{*}v_{6}^{*}d^{*}v_{4}^{*}e^{*}v_{5}^{*}a^{*}v_{4}^{*}b^{*}$ and
$v_{4}^{*}f^{*}v_{6}^{*}b^{*}v_{4}^{*}c^{*}v_{5}^{*}e^{*}v_{4}^{*}f^{*}$,
defining the $\mathcal{G}^{*}$-faces $F_{1}^{*}$, $F_{2}^{*}$, $F_{3}^{*}$ respectively. (Note that each edge occurs twice in different walks, but with opposite orientation). Glueing together the facial 
polygons corresponding to these faces
according to equally labeled sides and corner points, yields the plane representations of $\mathcal{G}^{*}$ in 
Fig.\ref{Figure9.2}-(i).

\begin{figure}[h!]
\begin{center}
\includegraphics[scale=0.7]{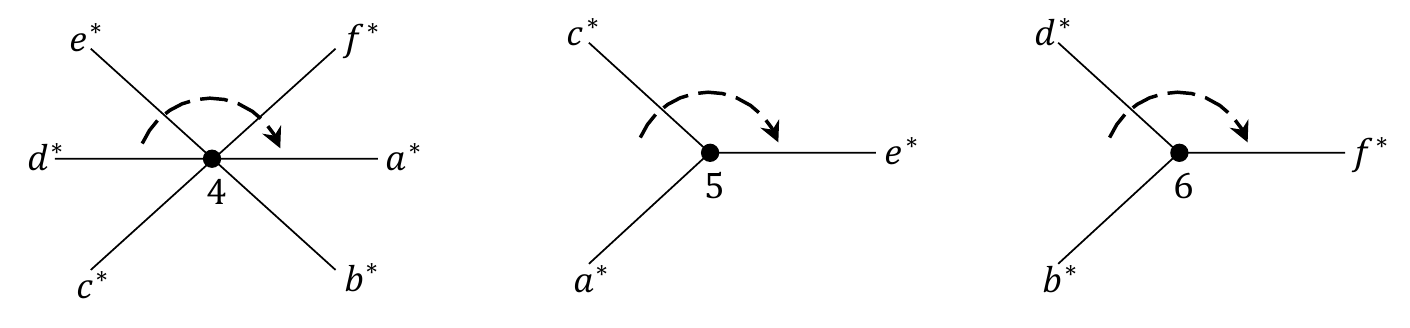}
\caption{\label{Figure9.3} The rotation systems for $\mathcal{G}^{*}$, according to Scheme (\ref{1-1-1}).}
\end{center}
\end{figure}

If we start from a $\Pi$-walk for $F_{4}$, according to the Scheme (\ref{1-1-2}), we find (by the same argumentation as above) plane representations for 
$\mathcal{G}$ and $\mathcal{G}^{*}$; see 
Fig.\ref{Figure9.2}-(ii).\\

\noindent
Note that in all graphs in Fig.\ref{Figure9.2} the anti-clockwise (clockwise) orientation of the cyclic permutations of edges incident with the same vertex induces a clockwise (anti-clockwise) orientation of the faces.\\

\noindent
In Subcase 1.2 there is {\it precisely one} -up to relabeling- possibility for $w_{F_{4}}$ according to the scheme: 

\begin{equation}
\label{1.2}
w_{F_{4}}: v_{1}av_{2}bv_{1}cv_{2}dv_{1}ev_{3}fv_{1}a.
\end{equation}
In this case however, there are {\it three} pairs of $\mathcal{G}$-edges at $v_{1}$ determining (positively measured) sectors of $F_{4}$. 
So, reasoning as in Subcase 1.1, there are two possibilities for the (anti clockwise) cyclic permutations of the $\mathcal{G}$-edges at $v_{1}$ (and thus also two different rotation systems; see Fig. \ref{Figure9.4}).

\begin{figure}[h!]
\begin{center}
\includegraphics[scale=0.5]{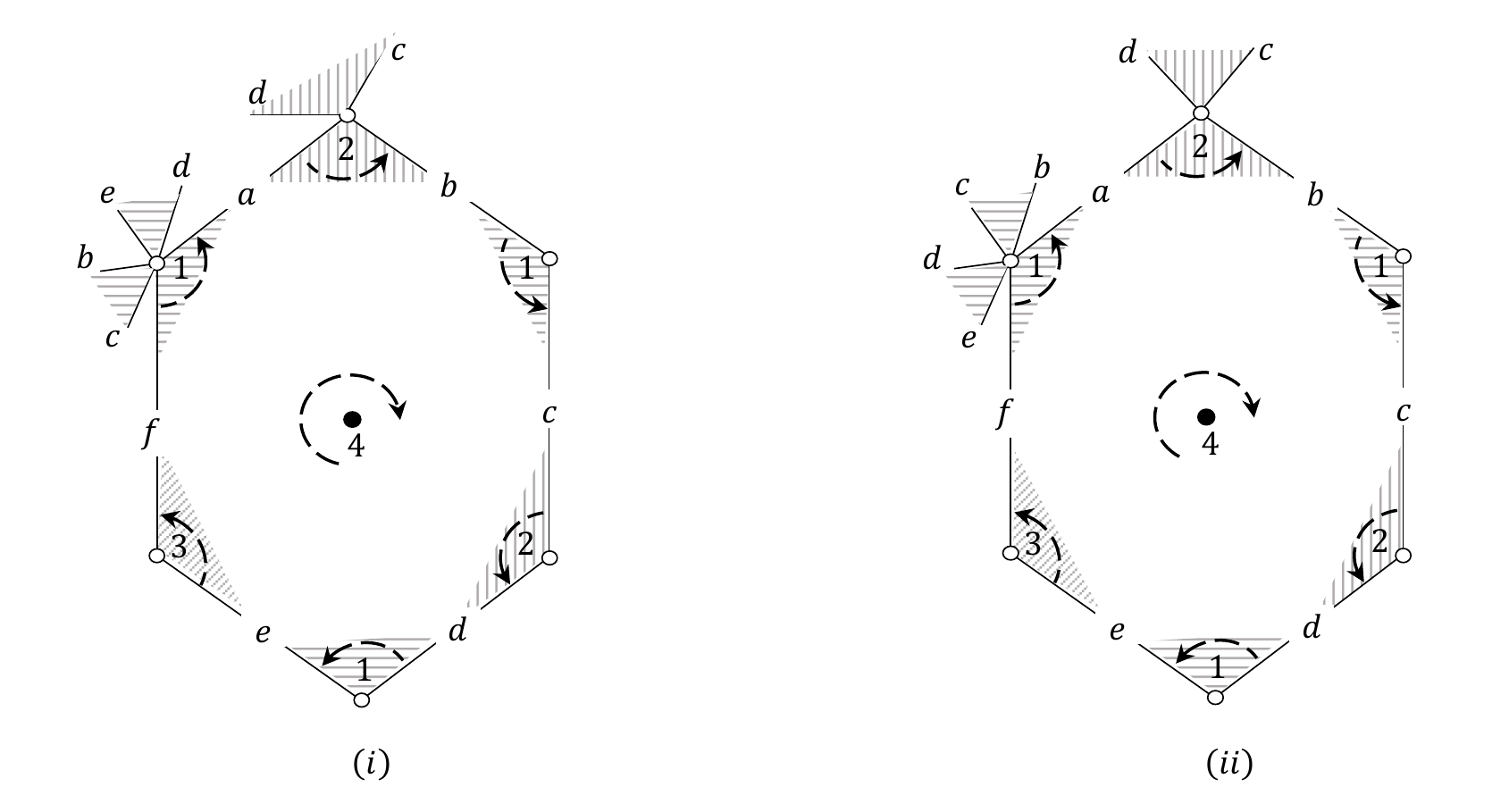}
\caption{\label{Figure9.4} The two possible rotation systems
in Subcase 1.2.}
\end{center}
\end{figure}

Starting from Fig. \ref{Figure9.4}-(i) and applying the {\it face traversal procedure}, we find the facial walks 
$v_{1}fv_{3}ev_{1}bv_{2}cv_{1}f$
and $v_{1}av_{2}dv_{1}a$
, which together with Scheme (\ref{1.2}) define the faces $F_{5}$, $F_{6}$ and $F_{4}$ respectively. Reasoning as in Subcase 1.1, we arrive at the plane realizations of $\mathcal{G}$ and $\mathcal{G}^{*}$ as depicted in Fig.\ref{Figure9.5}-(i). In the case of Fig.\ref{Figure9.4}-(ii) the facial walks $v_{1}dv_{2}av_{1}bv_{2}cv_{1}d$ 
and $v_{1}ev_{3}fv_{1}e$,
together with Scheme (\ref{1.2}), define the faces $F_{5}$, $F_{6}$ and $F_{4}$ respectively. 
Reasoning as in Subcase 1.1, we obtain
the plane representations for $\mathcal{G}$ and $\mathcal{G}^{*}$ as depicted in Fig.\ref{Figure9.5}-(ii).

Note that
both graphs $\mathcal{G}$ in Fig.\ref{Figure9.5} are {\it self dual} (cf. footnote \ref{Vtnt6}, or note that $\delta(\mathcal{G})= \delta(\mathcal{G}^{*})$ and use Lemma 3.5 in \cite{HT2}), but-by inspection of their rotation systems-are {\it not equivalent} (cf.  Subsection 1.2).\\

\begin{figure}[h!]
\begin{center}
\includegraphics[scale=0.2]{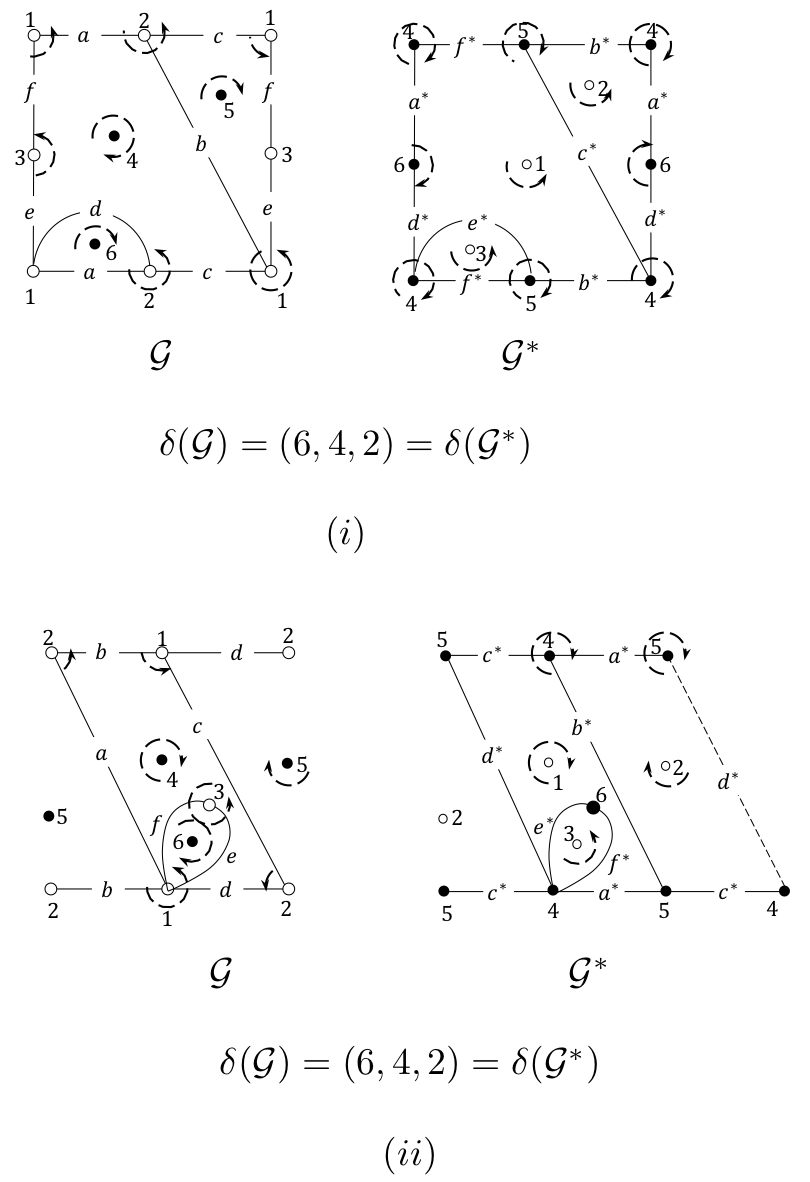}
\caption{\label{Figure9.5} The two possible plane representations for $\mathcal{G}$, $\mathcal{G}^{*}$ in Subcase 1.2.}
\end{center}
\end{figure}

\noindent
{\it Ad Case 2}: Because the $\Pi$-walk of $F_{4}$ has no loops and consists of an Euler trail on the five edges of $\mathcal{G}$, there is only one- up to relabeling - possibility for $w_{F_{4}}$ (see Fig.\ref{Figure9.6}-(i)):
$$
w_{F_{4}}: v_{1}av_{3}bv_{2}cv_{1}dv_{2}ev_{1}a.
$$
In contradistinction with the previous Case 1, now there is one edge, namely $f$, that is not contained in $w_{F_{4}}$. By Lemma \ref{NL11.2}, this edge must connect either $v_{1}$ to $v_{2}$ ($f: v_{1} \leftrightarrow v_{2}$), or $v_{1}$ to $v_{3}$ ($f: v_{1} \leftrightarrow v_{3}$), or 
$v_{2}$ to $v_{3}$ ($f: v_{2} \leftrightarrow v_{3}$); compare Fig. \ref{Figure9.6}-(ii) where we show the part of the abstract graph $\mathbb{P} (\mathcal{G})$ underlying $\mathcal{G} \wedge \mathcal{G}^{*}$ that is determined by $\partial F_{4}$. To begin with, we focus on the first two sub cases.

Taking into account the various positions of $f$ with respect to local sectors of $F_{4}$ at $v_{1}$ and $v_{2}$ (when $f: v_{1} \leftrightarrow v_{2}$), respectively $v_{1}$ and $v_{3}$ (when $f: v_{1} \leftrightarrow v_{3}$), we find {\it four} respectively {\it two} possibilities for the rotation systems; see Fig.\ref{Figure9.7}. The Subcases 
$f: v_{1} \leftrightarrow v_{3}$ and $f: v_{2} \leftrightarrow v_{3}$ are not basically different\footnote{\label{FN22} Relabeling $v_{1} \leftrightarrow v_{2}$, $a \leftrightarrow b$ and $c \leftrightarrow e$, transforms the two configurations in Fig.\ref{Figure9.7}(v) and (vi) into configurations that generate (anti-clockwise oriented) rotation systems describing the case $f: v_{2} \leftrightarrow v_{3}$.}.
So, we may neglect the case $f: v_{2} \leftrightarrow v_{3}$. Reasoning as in Case 1, the rotation systems in Fig.\ref{Figure9.7} yield 
the possible planar representations of $\mathcal{G}$ and $\mathcal{G}^{*}$; see Fig.\ref{Figure9.8}.
Note that- by inspection of their rotation systems -all graphs $\mathcal{G}$ in this figure are different under {\it orientation preserving isomorphisms}, whereas {\it only} in the cases of Fig.\ref{Figure9.8}(iii) and (iv) these graphs are equal w.r.t. an {\it orientation reversing isomorphism} (apply the relabeling introduced in Footnote \ref{FN22}). Apparently, the graphs $\mathcal{G}$ and $\mathcal{G}^{*}$ (and thus also $\mathcal{G}^{*}$ and $\mathcal{G}$) in Fig.\ref{Figure9.8}(i), resp. Fig.\ref{Figure9.8}(v) are equal (under an orientation preserving isomorphism) 
The graphs 
$\mathcal{G}$ in Fig. \ref{Figure9.8} (ii),(iii),(iv),(vi) are {\it self-dual}.

\begin{figure}[h!]
\begin{center}
\includegraphics[scale=0.5]{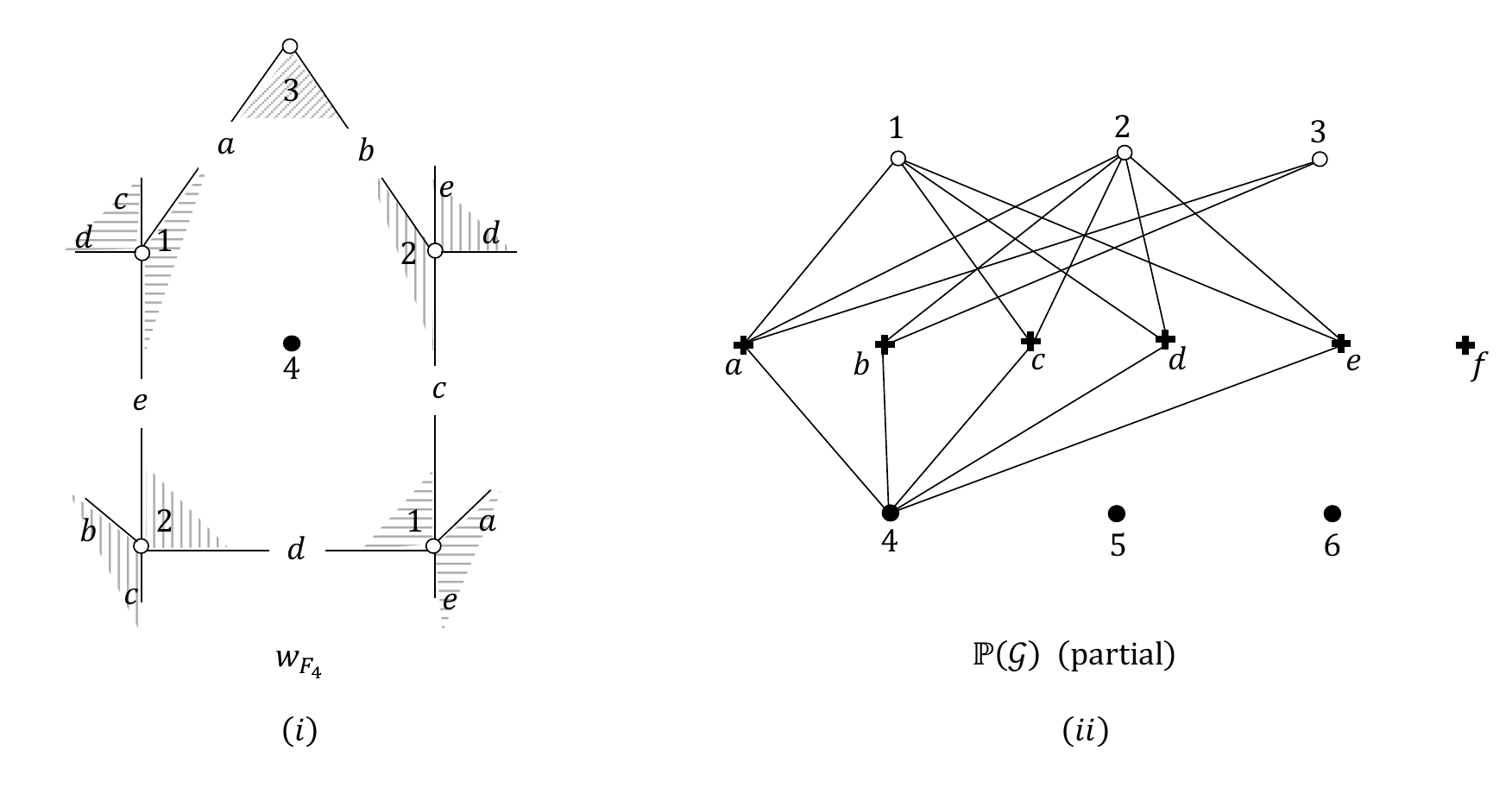}
\caption{\label{Figure9.6} The $\Pi$-walk for $F_{4}$ in Case 2.}
\end{center}
\end{figure}

\begin{figure}[h!]
\begin{center}
\includegraphics[scale=0.5]{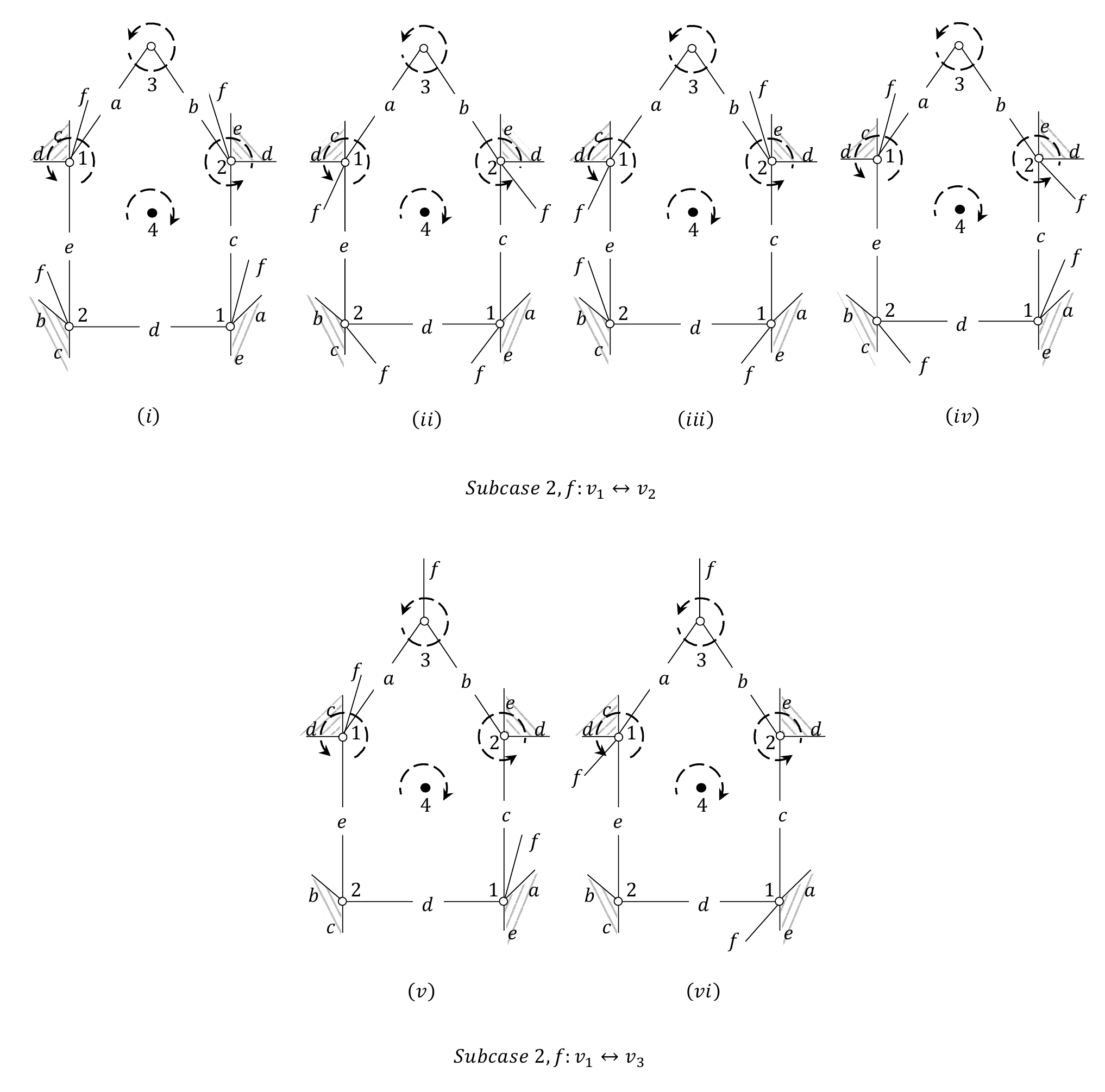}
\caption{\label{Figure9.7} The possible rotation systems for $\mathcal{G}$ in Case 2.}
\end{center}
\end{figure}

\begin{figure}[h!]
\begin{center}
\includegraphics[scale=0.25]{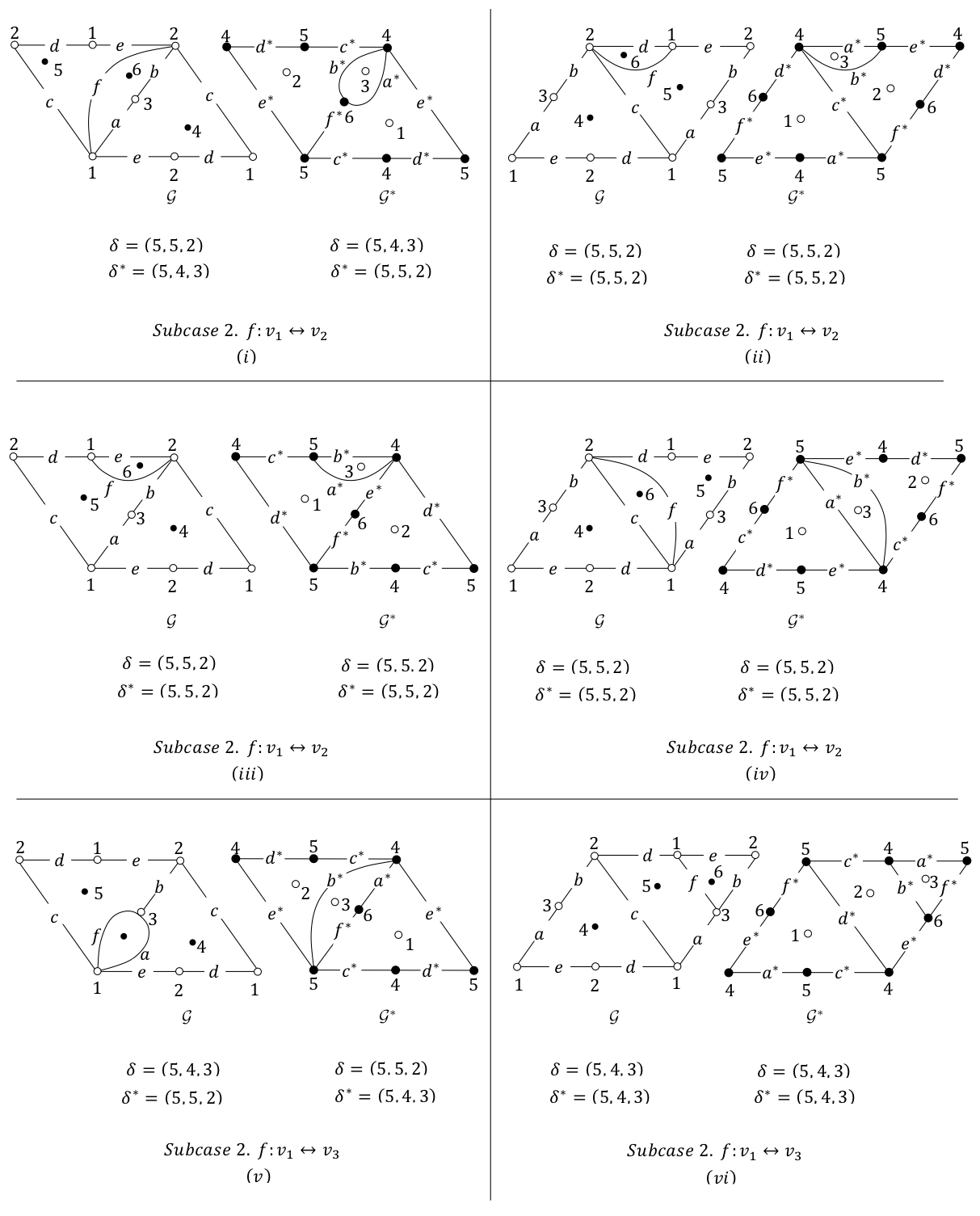}
\caption{\label{Figure9.8} The graphs $\mathcal{G}$ and $\mathcal{G}^{*}$ in Case 2.}
\end{center}
\end{figure}

\noindent
{\it Ad Case 3}: Without loss of generality, there are a priori {\it two} possibilities for the $\Pi$-walks of an arbitrary face, say $F_{4}$; see 
Fig.\ref{Figure9.9}(i) and (ii). By Lemma \ref{NL11.2} and by inspection of the corresponding partial graph $\mathbb{P} (\mathcal{G})$, the first possibility is ruled out. So, we focus on Fig. \ref{Figure9.9}(ii). Recall that two facial sectors at the same vertex $v_{i}$ are separated by facial sectors (at $v_{i}$) {\it not} belonging to $F_{4}$ and that in the actual case we have $\delta\!=\!\delta^{*}\!=\!(4,4,4)$. So, we find the rotation systems and the distribution of ``local facial sectors'' as depicted in Fig. \ref{Figure9.9}(ii), where the roles of both $e$, $f$ and $F_{5}$, $F_{6}$ may be interchanged. Now, by the face traversal procedure we find:
$$
\text{Apart from relabeling and equivalency, there is only one (self dual) graph possible,
Fig. \ref{Figure9.10}}.
$$

\newpage

\begin{figure}[h!]
\begin{center}
\includegraphics[scale=0.65]{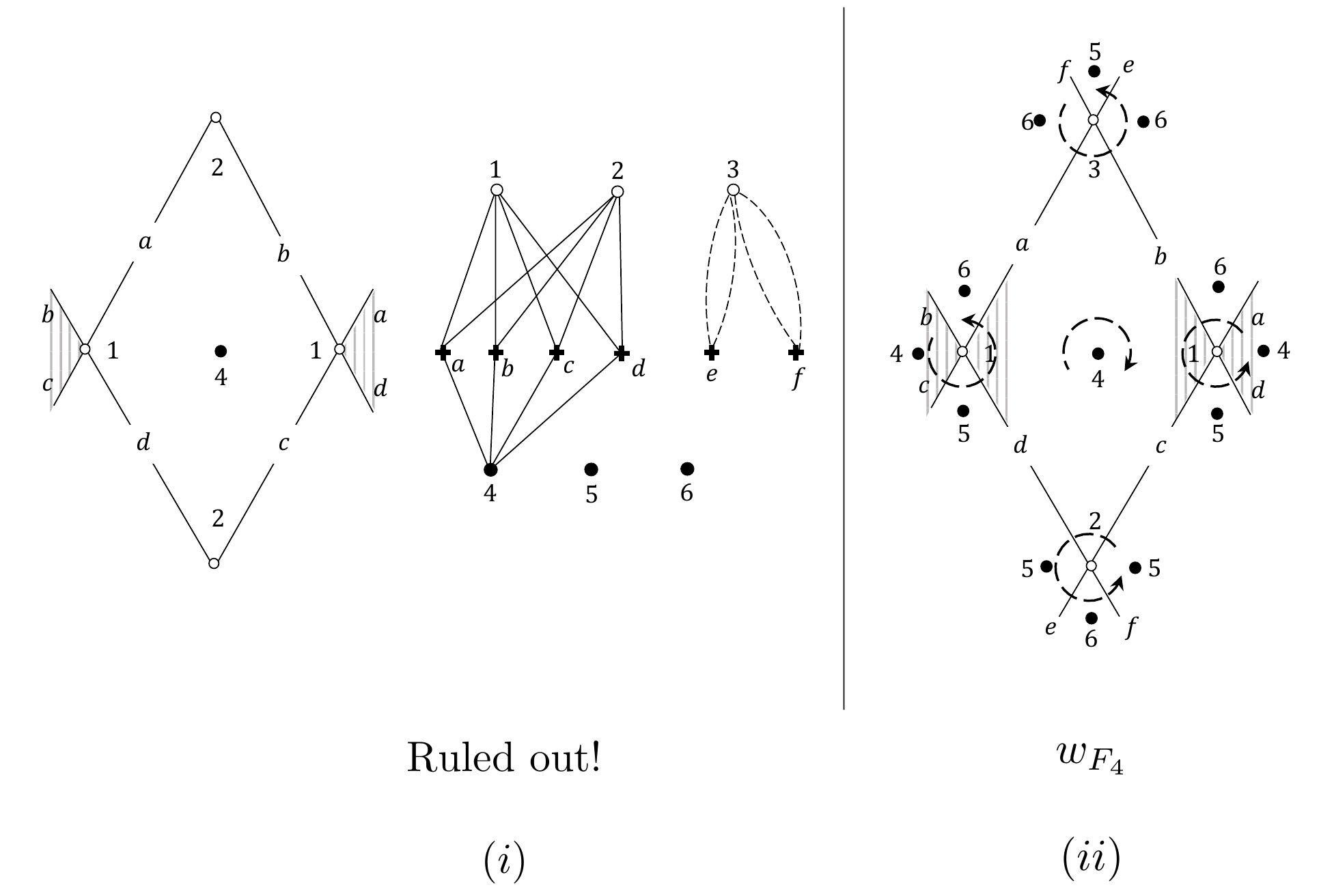}
\caption{\label{Figure9.9} Apriori possibilities for $w_{F_{4}}$ in Case 3.}
\end{center}
\end{figure}

\begin{figure}[h!]
\begin{center}
\includegraphics[scale=0.6]{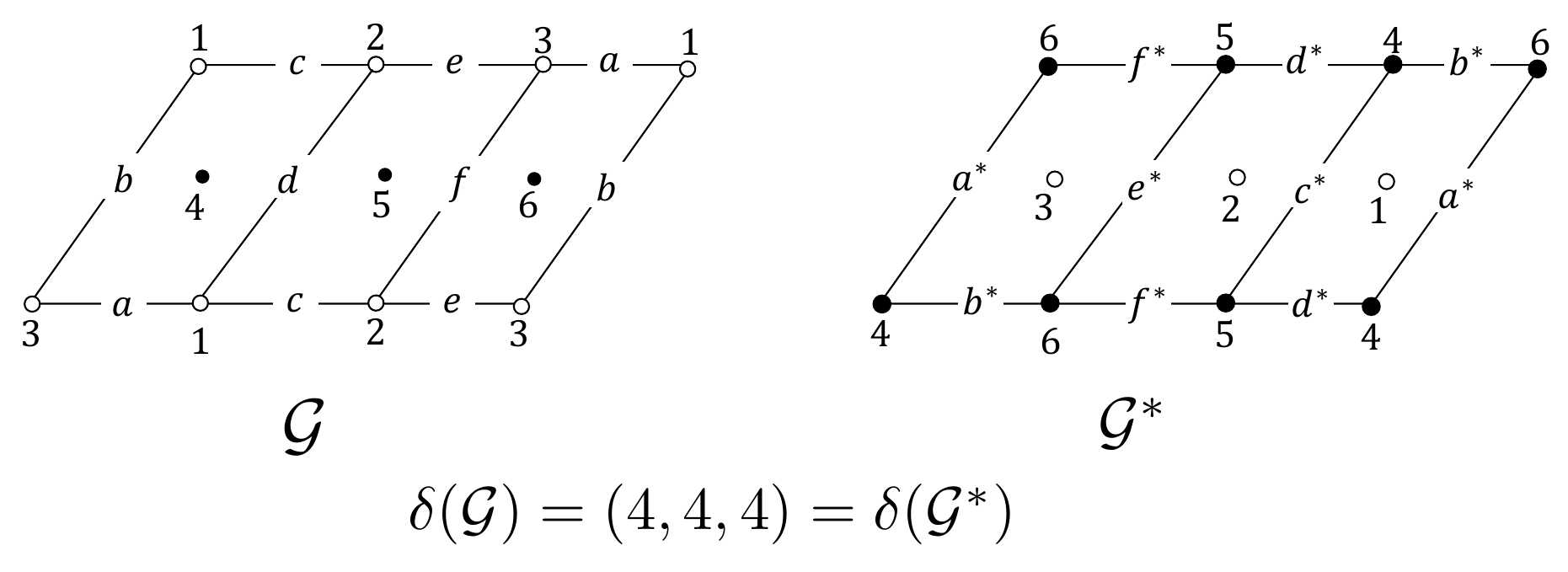}
\caption{\label{Figure9.10} The only possible graphs $\mathcal{G}$ and $\mathcal{G}^{*}$ in Case 3.}
\end{center}
\end{figure}

\newpage

\noindent
Now the representation result from Subsection 1.1
and the remark there about dual flows
, together with the above analysis of the $3^{rd}$ order Newton graphs yields:

\begin{theorem}$($Classification of third order Newton graphs$)$
\label{T11.3} 
\begin{itemize}
\item Apart from conjugacy and duality, there are precisely {\it nine} possibilities for the $3^{rd}$ order structurally stable elliptic Newton flows. These possibilities are characterized by the Newton graphs in Fig.\ref{Figure9.11}.
\item If we add to Fig.\ref{Figure9.11} the duals of the graphs in Fig.\ref{Figure9.11} $($i$)$, $($ii$)$, $($v$)$, we obtain a classification under merely conjugacy, containing {\it twelve} different possibilities.
\end{itemize}
\end{theorem}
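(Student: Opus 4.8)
The plan is to assemble the classification by simply collecting the output of the case analysis that precedes the theorem, and then carefully sorting the resulting list of Newton graphs into equivalence classes under (a) orientation-preserving isomorphism and (b) orientation-reversing isomorphism (i.e.\ duality, using that $\mathcal{G}^{*} \sim -\mathcal{G}^{*}$ and the self-duality notion of footnote~\ref{Vtnt6}). Concretely, I would first list all graphs produced: two from Subcase~1.1 (Fig.~\ref{Figure9.2}), two from Subcase~1.2 (Fig.~\ref{Figure9.5}), six from Case~2 (Fig.~\ref{Figure9.8}), and one from Case~3 (Fig.~\ref{Figure9.10}), together with their duals. Since Cases~1,~2,~3 are mutually exclusive and exhaustive by Lemma~\ref{NL11.2}, this list is complete; the only work is to remove duplicates.

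The second step is the deduplication under orientation-preserving isomorphism alone (the ``twelve'' count). Here I would invoke the observations already recorded in the case analysis: in Fig.~\ref{Figure9.2} the two graphs are inequivalent (distinct rotation systems), and similarly the two graphs of Fig.~\ref{Figure9.5} are inequivalent though self-dual; in Fig.~\ref{Figure9.8} all six graphs $\mathcal{G}$ are pairwise inequivalent under orientation-preserving isomorphism, but $\mathcal{G}(\mathrm{i})\sim\mathcal{G}^{*}(\mathrm{i})$ and $\mathcal{G}(\mathrm{v})\sim\mathcal{G}^{*}(\mathrm{v})$ as graphs (so their duals are already in the list, up to equivalence), while (ii),(iii),(iv),(vi) are self-dual; and Case~3 gives a single self-dual graph. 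Counting: the self-dual graphs (two from Subcase~1.2, four from Case~2, one from Case~3, total seven) each contribute one class; the remaining graphs come in dual pairs — Subcase~1.1 gives two graphs each with a genuinely new dual (contributing $2+2$), and Case~2 (i) and (v) each with a new dual (contributing $1+1$) — but under orientation-preserving isomorphism we keep all of $\mathcal{G}$ and $\mathcal{G}^{*}$, giving $7 + 2\cdot 2 + 1 = 12$. I would present this as a short table keyed to the figures.

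The third step is to pass from twelve to nine by identifying a graph with its dual. Seven of the twelve are self-dual, hence unaffected. Among the remaining five pairs-of-representatives, the two Subcase~1.1 graphs $\{\mathcal{G},\mathcal{G}^{*}\}$ collapse to one, and likewise each of the two Case~2 duality pairs (coming from (i) and (v)) collapses — but (i) and (v) are \emph{the same} pair since $\mathcal{G}(\mathrm{v})\sim\mathcal{G}^{*}(\mathrm{i})$, so careful bookkeeping is needed here. Recount: $7$ self-dual $+ 1$ (Subcase~1.1 pair) $+ 1$ (Case~2 non-self-dual pair) $= 9$. Finally I would translate ``up to equivalency and duality of graphs'' into ``up to conjugacy and duality of flows'' via the Representation and classification result of Subsection~1.1, namely \eqref{rep} together with $\mathcal{G}(\tfrac1f)\sim -\mathcal{G}(f)^{*}$, which is a formal application once the graph classification is in hand; and the second bullet follows by re-adding the duals of Fig.~\ref{Figure9.11}(i),(ii),(v) exactly as stated.

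The main obstacle I anticipate is purely combinatorial vigilance rather than conceptual depth: correctly deciding, for each pair of graphs appearing in Figs.~\ref{Figure9.2},~\ref{Figure9.5},~\ref{Figure9.8},~\ref{Figure9.10}, whether they are equivalent (compare rotation systems directly, as the text does), whether they are self-dual (either exhibit the isomorphism $\mathcal{G}\sim -\mathcal{G}^{*}$, or use the criterion $\delta(\mathcal{G})=\delta(\mathcal{G}^{*})$ with Lemma~3.5 of~\cite{HT2} as already invoked for Fig.~\ref{Figure9.5}), and whether apparently different cases are related by a relabeling (as in footnote~\ref{FN22} for $f\colon v_2\leftrightarrow v_3$). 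The risk is double-counting or under-counting a duality pair — in particular recognizing that $\mathcal{G}(\mathrm{v})$ and $\mathcal{G}^{*}(\mathrm{i})$ of Fig.~\ref{Figure9.8} represent the same unordered pair $\{\mathcal{G},\mathcal{G}^{*}\}$ — so I would organize the argument around an explicit $12$-entry list with its duality involution marked, from which both ``$9$ under duality'' and ``$12$ under conjugacy alone'' can be read off mechanically.
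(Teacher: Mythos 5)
Your overall strategy (harvest the graphs from the preceding case analysis, deduplicate under equivalence and under duality, then transfer to flows via the representation result) is exactly the paper's proof, whose entire content is that case analysis. But the bookkeeping, which here is the whole substance, does not actually close: your totals $9$ and $12$ are reached only through compensating miscounts. Concretely, Subcase~1.1 produces \emph{two} inequivalent graphs (schemes \eqref{1-1-1} and \eqref{1-1-2}), each with $\delta=(4,4,4)$ but $\delta^{*}=(6,3,3)$, so neither is self-dual and neither is equivalent to the dual of the other; hence Subcase~1.1 contributes \emph{two} dual pairs, not the single ``Subcase~1.1 pair'' you count in your third step. Likewise, in your second step the non-self-dual Case~2 pair coming from Fig.~\ref{Figure9.8}(i),(v) contributes \emph{two} graphs to the list kept under conjugacy alone (they are inequivalent, being duals of one another, as you yourself state in step three; your earlier claim $\mathcal{G}(\mathrm{i})\sim\mathcal{G}^{*}(\mathrm{i})$, $\mathcal{G}(\mathrm{v})\sim\mathcal{G}^{*}(\mathrm{v})$ contradicts this and the paper). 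A faithful count of the list you describe therefore gives $7+2\cdot 2+2=13$ classes under conjugacy alone and $7+2+1=10$ under conjugacy and duality, not $12$ and $9$.

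The missing ingredient is the identification, recorded explicitly in the paper, of the two Case~2 graphs of Fig.~\ref{Figure9.8}(iii) and (iv): they admit no orientation-preserving isomorphism, but they are equal under an \emph{orientation-reversing} isomorphism (the relabeling of footnote~\ref{FN22}). Since conjugacy of flows (footnote~\ref{Vtn5}) does not require the conjugating homeomorphism of $T$ to preserve the orientation of the torus, mirror-image Newton graphs correspond to conjugate flows, and this identification is precisely what lowers the self-dual count from $7$ to $6$ and yields $9=6+3$ classes up to duality (the three non-self-dual classes being the two Subcase~1.1 pairs and the pair $\{(\mathrm{i}),(\mathrm{v})\}$ of Fig.~\ref{Figure9.8}) and $12=9+3$ under conjugacy alone, in agreement with the second bullet, which adds exactly three duals. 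Your proposal never invokes this orientation-reversing identification --- indeed your stated deduplication criterion is orientation-preserving isomorphism only, and you would also need to say a word on why this identification is compatible with the equivalence \eqref{rep} --- so as written the argument cannot produce the numbers claimed in the theorem.
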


\begin{remark}
\label{R7.4} {\it The Case $r=2$. }\\
By similar (even easier) arguments as used in the above Case $r=3$, it can be proved that -up to equivalency-there is only one (self-dual) possibility for the $2^{nd}$ order Newton graphs; see Fig.\ref{Figure9.12} (Note that in view of the {\it E-property} both facial walks of such graphs have length 4, whereas
the role of the {\it A-property} is not relevant, see Subsection 1.1). For a different approach, see Corollary 2.13 in \cite{HT2}. 
\end{remark}

\begin{remark}
\label{R2.4}
In case of degenerate elliptic functions, it is possible to describe the corresponding Newton flows by so-called pseudo Newton graphs, see our 
paper \cite{HT4}.
\end{remark}

\begin{figure}[h!]
\begin{center}
\includegraphics[scale=0.6]{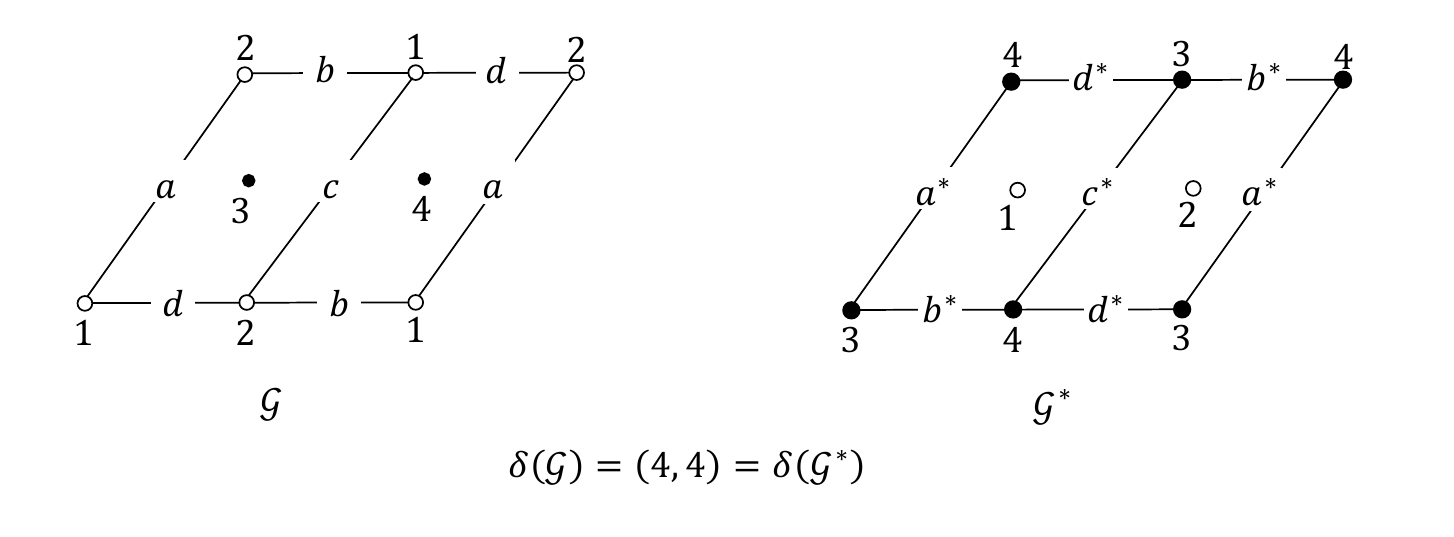}
\caption{\label{Figure9.12} The  $2^{nd}$ order Newton graphs.}
\end{center}
\end{figure}

\newpage

\begin{figure}[h!]
\begin{center}
\includegraphics[scale=0.55]{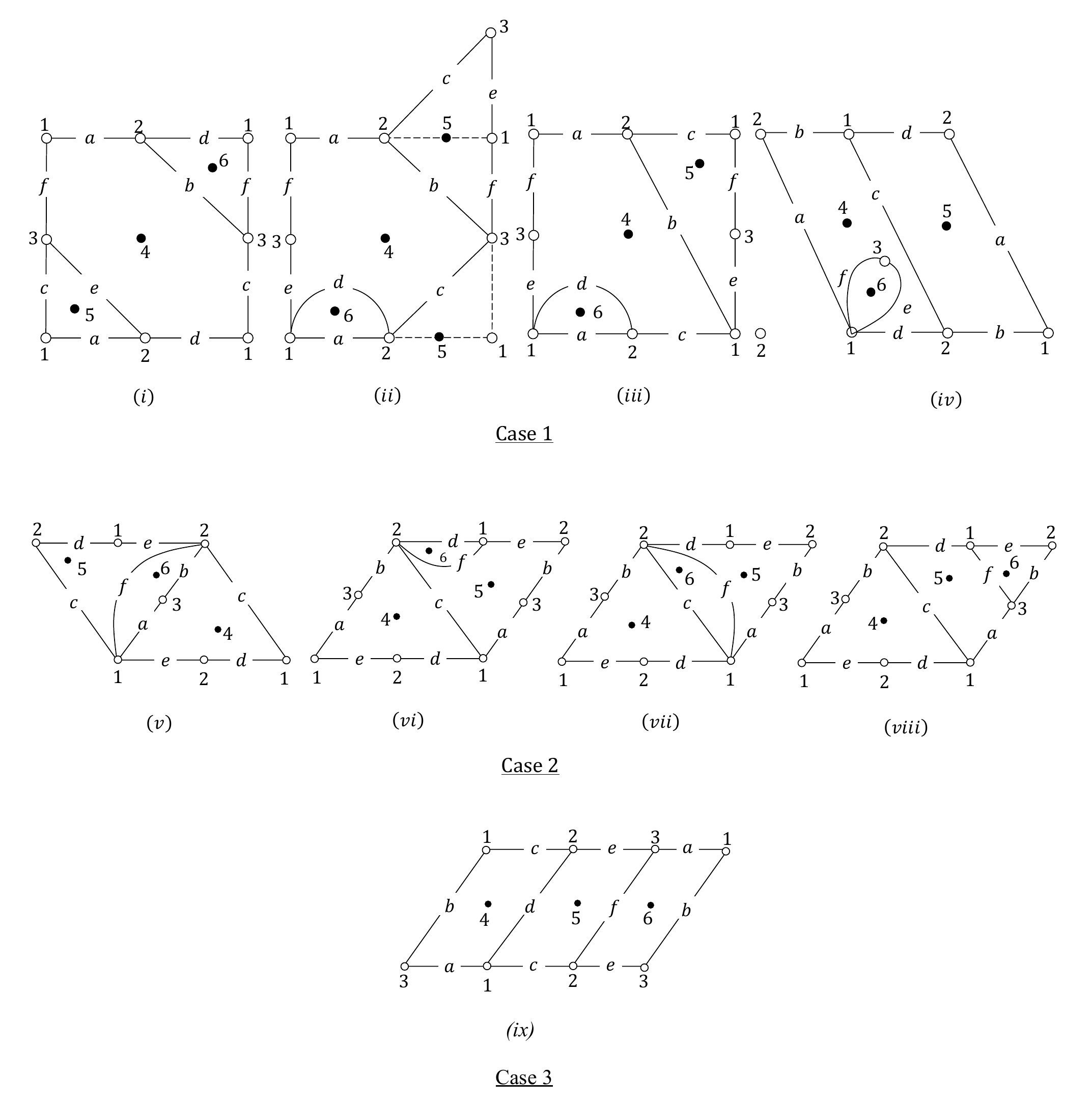}
\caption{\label{Figure9.11} The graphs characterizing structurally stable elliptic Newton flows of order 3.}
\end{center}
\end{figure}

\end{document}